\newcommand\F{\mbox{I\kern-2pt F}}
\def\E{{\bf E}}
\def\P{{\bf P}}
\newcommand\fdem{$\Box$}
\newcommand\beq{\begin{equation}}
\newcommand\eeq{\end{equation}}
\newcommand\bea{\begin{eqnarray}}
\newcommand\eea{\end{eqnarray}}
\newcommand\bean{\begin{eqnarray*}}
\newcommand\eean{\end{eqnarray*}}
\begin{document}

\title{On the integro-differential equation arising in the ruin problem for annuity payment models}
\author{Platon Promyslov} 

\institute{\at
HSE University, Faculty of Computer Science, Moscow, 101000, Russia\\
 \email{platon.promyslov@gmail.com}
 }

\date{Received: date / Accepted: date}

\titlerunning{On the integro-differential equation}

\maketitle
 
\begin{abstract}
In the classical annuity payments model, the capital reserve of an insurance company decreases at a constant rate and increases via positive jumps. We consider a generalization of this model by supposing that a fixed portion of the capital reserve is continuously invested in a risky asset whose price follows a geometric Brownian motion, while the complementary part is placed in a bank account with a constant interest rate. The quantity of interest is the ruin probability on the infinite time horizon as a function of the initial capital. Under rather weak assumptions on the distribution of jumps, we prove that the ruin probability is the classical solution of a second-order integro-differential equation and obtain its exact asymptotics for large values of the initial capital.
\end{abstract}

\keywords{Annuity payment models \and 
Integro-differential equations for ruin probabilities \and 
Asymptotics of ruin probabilities}

\subclass{60G44, 91G05}

\section{Introduction}
In the classical collective risk theory, initiated by Filip Lundberg in 1903 and developed further by Harald Cram\'er in the thirties, it was assumed that an insurance company kept its reserve entirely separate from risky financial activities. This was natural for the era: the legislation of the period was based on a paradigm of social responsibility and was strongly influenced by the terrible consequences of stock market disasters. However, evolving economic realities, together with progress in financial theory, led to a relaxation of legal constraints and the emergence of theoretical studies on ruin problems with risky investments. The origin of this theory can be traced back to the seminal 1993 paper by Jostein Paulsen, who applied the Kesten--Goldie implicit renewal theory (also known as the theory of distributional equations), as exposed in Goldie's \cite{Go91}, to derive asymptotics for ruin probabilities. A number of authors have since contributed to its development along the same lines, leveraging recent progress in the theory of distributional equations as summarized in the book \cite{BDM}. We mention here only a few recent papers covering Sparre Andersen type models with risky investments; see \cite{EKS}, \cite{KPro}, \cite{KLP}, and references therein. Although this method is powerful and applicable to a variety of models, it has a weakness: it provides only the rate of convergence to zero of the ruin probability, not the exact asymptotic (i.e., it gives no information on the leading constant).

For the asymptotic analysis of Lundberg--Cram\'er type models with investment, there is an alternative method based on representing the survival (or ruin) probability as a function satisfying a second-order integro-differential equation. Such an equation seems to have appeared for the first time in a short note by Anna Frolova, \cite{Fr}, who observed that in the case of exponential claims, it can be reduced to a linear ordinary differential equation (ODE). From its general solution, she conjectured that the ruin probability decays as a power function. This conjecture was later proved in \cite{FrKP} for the non-life insurance model and in \cite{KP2016} for the annuity payments model.

In the present note, we study the integro-differential equation arising in the asymptotic analysis of ruin probabilities for an annuity model more general than the one described in \cite{KP2016}. Specifically, we suppose that the insurance company pays pensions to its customers at a constant rate $c>0$ and receives, as income, the remaining part of their pension fund when the contracts expire. The company instantaneously invests a fraction $\kappa$ of its capital reserve in a risky asset whose price $S=(S_t)$ evolves as a geometric Brownian motion (gBm) with parameters $a$ and $\sigma>0$. The remaining fraction $1-\kappa$ of the capital reserve is invested in a non-risky asset (a bank account) with a rate of return $r$. Formally, the process $X=X^u$ describing the evolution of the capital reserve is given by the following stochastic differential equation with jumps:
\begin{equation}\label{one}
X_t = u + \int_0^t \kappa X_s (a\,ds +\sigma\,dW_s) + \int_0^t r(1-\kappa)X_s\,ds + P_t,
\end{equation}
where $u \ge 0$ is the initial capital, $W=(W_t)$ is a standard Wiener process and $P=(P_t)$ is the ``business'' process (or Cram\'er--Lundberg process), independent of $W$. In the classical literature, $P$ is usually represented as
\[
P_t = -ct + \sum_{i=1}^{N_t} \xi_i,
\]
where $N=(N_t)$ is a Poisson process with intensity $\lambda > 0$, and $(\xi_i)_{i\ge 1}$ is an i.i.d. sequence of strictly positive random variables, independent of $N$, with common distribution function $F$ such that $F(0)=0$. 
In the literature on stochastic calculus, it is standard to represent the sum as a stochastic integral:
\[
\sum_{i=1}^{N_t} \xi_i = \int_0^t \int_0^\infty x\,p(ds,dx) ,
\]
where $p(ds,dx)$ is a Poisson random measure with mean measure $q(ds,dx) = \lambda ds\,F(dx)$.

Such a model can also describe the capital evolution of a venture company paying current expenditures and receiving incomes $\xi_i$ from selling innovations.

After regrouping terms, the equation (\ref{one}), written in the traditional form of a stochastic differential equation, becomes:
\begin{equation}\label{two}
dX_t = \big( (a-r)\kappa + r\big) X_t\,dt + \kappa\sigma X_t\,dW_t + dP_t, \qquad X_0 = u.
\end{equation}

Let $\tau^u := \inf \{t: X^u_t \le 0\}$ (the ruin time), $\Psi(u) := \P(\tau^u < \infty)$ (the ruin probability), and $\Phi(u) := 1 - \Psi(u)$ (the survival probability).

The aforementioned paper \cite{KP2016}, together with its complements \cite{PZ} and \cite{PerErr} (treating the case $\beta=0$), deals with the asymptotic analysis of the ruin probability under the assumption of a riskless rate $r=0$. For the case where $\kappa=1$ and $F$ is an exponential distribution, it was shown that if $\beta := 2a/\sigma^2 - 1 > 0$, then $\Psi(u) \sim C u^{-\beta}$ as $u \to \infty$; if $\beta \le 0$, ruin is imminent. If $\kappa \in (0,1)$, the same conclusion holds with $\beta_\kappa := 2a/(\kappa\sigma^2) - 1$. The equation $\beta_\kappa = 0$ defines the threshold for the fraction of risky investment above which ruin is certain. The first step of the argument, which is technically difficult and delicate, is to prove that the survival probability is smooth under appropriate assumptions on $F$. The second, easier step is to prove that $\Phi$ satisfies an integro-differential equation. In the specific case of an exponential jump distribution, this equation can be reduced--by differentiating and eliminating the integral term--to a linear ODE of third order. This ODE turns out to be of second order for the derivative of the survival probability. Asymptotic results for solutions of such equations are available in the literature. The property that the ruin probability cannot decrease faster than some power function, established by Kalashnikov and Norberg in \cite{Kalash-Nor}, then yields the aforementioned result.

It is worth noting that the method of eliminating integral terms works not only for exponential jump distributions but also for other classes (e.g., Pareto distributions) and can lead to asymptotic results via Laplace transforms and Tauberian theorems; see \cite{ABL}, \cite{ACT}, and \cite{A2025}.

Importantly, if the survival probability is smooth, i.e., if $\Phi \in C^2$, then straightforward arguments based on It\^o's formula lead to the conclusion that $\Phi$ satisfies, in the classical sense, the integro-differential equation (IDE)
\begin{multline}\label{int-diff}
\frac{1}{2} \kappa^2\sigma^2 u^2 \Phi''(u) + ((a-r)\kappa + r) u \Phi'(u) =
\\
= c \Phi'(u) - \lambda \int_0^\infty (\Phi(u+y) - \Phi(u)) \,dF(y),
\end{multline}
see, e.g., \cite{KP2016}. The same equation holds for the ruin probability.

Available sufficient conditions on $F$ ensuring the smoothness of $\Phi$ are rather restrictive: the property holds if $F$ has a density $f \in C^2$ such that $f' \in L^1(\mathbb{R}_+)$; see \cite{KPukh}. For this reason, some authors prefer to assume smoothness a priori. On the other hand, it is not difficult to show that $\Phi$ is a viscosity solution of the above equation under much more general conditions, but proving uniqueness for boundary value problems in the viscosity sense is rather involved; see \cite{Bel-Kab} and, for an equation arising in a model based on a generalized Ornstein--Uhlenbeck process, the recent study \cite{AK2025}.

\smallskip
\textbf{Main result.} 
In this note, we employ a completely different proof strategy. Our main result is the following theorem.

\begin{theorem}
\label{main}
Suppose that $F(0)=0$, $\E[\xi] < \infty$ and 
$$\gamma := \dfrac{(a-r)\kappa + r}{\kappa^2 \sigma^2} > 1.$$
Then the survival probability $\Phi \in C([0,\infty)) \cap C^2((0,\infty))$ satisfies the integro-differential equation (\ref{int-diff}) with boundary conditions $\Phi(0)=0$, $\Phi(\infty)=1$. Moreover, there exists a finite constant $C > 0$ such that $\Psi(u) \sim C u^{-\gamma+1}$ as $u \to \infty$.
\end{theorem}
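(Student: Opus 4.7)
My plan is to split the theorem into two pieces --- the $C^2$-regularity of $\Phi$ together with the classical IDE, and the power-law tail $\Psi(u)\sim Cu^{-(\gamma-1)}$ --- and to prove the regularity via a Feynman--Kac representation, exploiting the smoothing of the non-degenerate diffusion between jumps rather than any smoothness of the jump distribution $F$. Conditioning on the first jump time $T_1\sim\mathrm{Exp}(\lambda)$ and writing $X^{(0),u}$ for the pure-diffusion process (the SDE with $\lambda=0$) starting at $u$, with hitting time of $0$ denoted $\tau^{(0),u}$, one obtains the fixed-point identity
\begin{equation*}
\Phi(u) \;=\; \E_u\Bigl[\int_0^{\tau^{(0),u}}\!\! \lambda e^{-\lambda t}\, V(X_t^{(0)})\,dt\Bigr], \qquad V(x) := \int_0^\infty \Phi(x+y)\,dF(y).
\end{equation*}

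For the regularity, I first establish by a standard pathwise stability argument that $\Phi$ is continuous on $[0,\infty)$ with $\Phi(0)=0$, so that $V$ is bounded and continuous. The identity above identifies $\Phi$ as the Feynman--Kac representation of the bounded classical solution $w$ of the inhomogeneous boundary-value problem
\begin{equation*}
\tfrac12\kappa^2\sigma^2 u^2 w''(u) + [((a-r)\kappa+r)u-c]\,w'(u) - \lambda w(u) = -\lambda V(u),\qquad w(0)=0,
\end{equation*}
on $(0,\infty)$. The operator is strictly elliptic because $\kappa\sigma>0$, so classical one-dimensional ODE regularity with continuous coefficients and continuous forcing gives $w\in C^2((0,\infty))$; a Dynkin computation applied to $e^{-\lambda t}w(X_t^{(0),u})$ stopped at $\tau^{(0),u}$ then shows $\Phi=w$, hence $\Phi\in C^2((0,\infty))$. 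Rearranging $-\lambda V(u)+\lambda w(u)$ as $\lambda\int(\Phi(u+y)-\Phi(u))\,dF(y)$ is exactly the IDE \eqref{int-diff}. This is where the present strategy departs from \cite{KP2016}: the smoothing is performed by the diffusion operator, not by any assumed density of $F$.

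For the asymptotic, I would use the variation-of-constants representation
\begin{equation*}
X^u_t = \cE_t\Bigl(u - c\!\int_0^t\!\cE_s^{-1}\,ds + \!\!\sum_{i:T_i\le t}\!\!\cE_{T_i}^{-1}\xi_i\Bigr),\quad \cE_t := \exp\{[(a-r)\kappa+r-\tfrac12\kappa^2\sigma^2]t+\kappa\sigma W_t\}.
\end{equation*}
Since $\cE_t>0$, ruin is equivalent to the sequence $S_n:=c\int_0^{T_n}\cE_s^{-1}\,ds-\sum_{i<n}\cE_{T_i}^{-1}\xi_i$ reaching level $u$; under $\gamma>1$ the quantity $\E[\cE_t^{-1}]$ is integrable in $t$, so $S_\infty$ is a.s.\ finite and integrable. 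Factoring out the multiplicative increments $\delta_n:=\cE_{T_n}/\cE_{T_{n-1}}$ represents $S_\infty$ as an exponential functional to which Goldie's implicit renewal theorem applies: the Cram\'er equation for the multiplier admits the required positive root under $\gamma>1$, non-arithmeticity of $\log\delta$ is automatic (Gaussian at an independent exponential time), and summand integrability comes from $\E[\xi]<\infty$. This yields $\P(\sup_n S_n\ge u)\sim Cu^{-(\gamma-1)}$ with a strictly positive constant, and in particular $\Phi(\infty)=1$, closing the boundary-value problem of the first step.

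The main obstacle is the uniqueness step that links $\Phi$ to the classical ODE solution $w$. Establishing that $\Phi$ actually satisfies the Feynman--Kac identity uses the strong Markov property at $T_1$ together with a careful treatment of the killing boundary; showing that the bounded classical solution $w$ is the only such solution of the inhomogeneous ODE requires controlling the behaviour of $X^{(0),u}$ near the killing boundary and at infinity, for which the hypothesis $\gamma>1$ is decisive: it guarantees a strictly positive non-ruin probability and sufficient integrability of $e^{-\lambda\tau^{(0)}}$ to make the Dynkin formula close. Once this loop is closed, the rest of the argument is a routine combination of classical ODE regularity with the strictly positive constant supplied by Goldie's theorem.
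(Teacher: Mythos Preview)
Your strategy is genuinely different from the paper's. The paper does not start from $\Phi$ and prove regularity; it works in the opposite direction. It rewrites \eqref{int-diff} as a first-order IDE for $g=\Phi'$, \emph{constructs} a strictly positive $C^1$ solution $\hat g$ by pure analysis --- a Banach fixed-point argument in the weighted space $\{g:\sup_{x\ge u_0}x^\gamma|g(x)|<\infty\}$ on $[u_0,\infty)$ (so that $g(u)\sim u^{-\gamma}$ is built into the function class), glued to the unique solution of a Volterra equation of the second kind on $(0,u_0]$ --- then integrates to a candidate $G(u)=C\int_0^u\hat g$, and only at the very end shows $G=\Phi$ via the bounded martingale $G(X^u_{t\wedge\tau^u})$. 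Regularity, the IDE, and the asymptotic (with $C=1/\int_0^\infty\hat g>0$ explicit) all fall out at once from this construction, using nothing beyond contraction mappings and Gronwall.

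Your Feynman--Kac route is plausible, but as written the regularity step has a real gap. You assert that ``classical one-dimensional ODE regularity with continuous coefficients and continuous forcing gives $w\in C^2((0,\infty))$'' and then apply Dynkin to identify $w$ with $\Phi$. But ODE regularity concerns smoothness of solutions that already exist; it does not produce a bounded classical solution of the two-point boundary problem with $w(0)=0$, and both endpoints are singular (the leading coefficient $u^2$ vanishes at $0$, so the operator is \emph{not} uniformly elliptic). The non-circular way to close your loop is to \emph{define} $w:=\lambda R_\lambda V$ as the $\lambda$-resolvent of the killed diffusion applied to the continuous bounded $V$, and then invoke the Green-function theory of one-dimensional regular diffusions to obtain $w\in C^2((0,\infty))$ directly; that is standard but must be said, and it is precisely the step your outline skips. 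On the asymptotic side, Goldie's theorem delivers $\Psi(u)\sim C u^{-(\gamma-1)}$ with $C\ge 0$, but you still owe a verification that $C>0$ (and a clean reduction of $\sup_t Y_t$ to a recursion of the form $R\stackrel{d}{=}AR+B$); the paper's construction yields $C>0$ automatically because $\hat g$ is strictly positive by design.
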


Our result substantially relaxes the existing conditions on the jump size distribution. Our approach is based on reducing the integro-differential equation to a pair of integral equations for the derivative of its solution. These integral equations are coupled at a suitably chosen point $u_0 \in (0,\infty)$. For the first integral equation on $[u_0,\infty)$, we establish uniqueness for a somewhat exotic terminal value problem. Specifically, we seek a solution with asymptotic behavior $u^{-\gamma}$ as $u \to \infty$, which is consistent with earlier results such as those in \cite{KP2016} (note that $\gamma=\beta+1$ in the case where $\kappa=1$ and $r=0$). It is important to note that we do not rely on previously established asymptotic formulas in our proof; they serve only as motivation. For a sufficiently large $u_0$, we find the solution using the Banach fixed point theorem. The second equation for the derivative is a Volterra integral equation of the second kind on $(0, u_0]$ with a terminal condition at $u_0$ defined via the solution of the first equation. Gluing the solutions together yields a solution for the derivative of the desired function, which includes a multiplicative constant $C$ to be determined. To satisfy the zero boundary condition at zero, the additive constant $C_0$ arising from integration to obtain the solution $\Phi$ of the original IDE must be taken as zero. The constant $C$ is then chosen to ensure that the limit at infinity equals one. Note also that $C$ is ``calculable'' in the sense that it is expressed as an integral involving the solutions of the aforementioned equations, which are themselves at least numerically computable. Finally, using It\^o's formula, we easily conclude that $\Phi$ coincides with the survival probability.

Our argument is remarkably simple. We avoid reducing the IDE to a higher-order ODE, which would introduce an additional constant, and instead rely only on standard results concerning the uniqueness of solutions to IDEs. Although conceptually inspired by the paper \cite{Grandits2004} on the non-life insurance model, our proofs are technically quite different.

\section{Analysis of the integro-differential equation} 
Recall that we assume that $F$ is continuous at zero ($F(0)=0$) and $\E \xi_1 <\infty$. 
Using integration by parts for functions of bounded variation (or Fubini's theorem), we get:
$$
\int_0^\infty \Phi(u+y) F(dy) = -\int_0^\infty \Phi(u+y) d\bar{F}(y) = \Phi(u) + \int_0^\infty \Phi'(u+y)\bar F(y)dy,
$$
where $\bar F:=1-F$. With this relation, the equation (\ref{int-diff}) can be written in the equivalent form 
\beq
\label{IDE-F}
\frac 12 \kappa^2\sigma^2u^2 \Phi''(u)+ ((a-r)\kappa +r) u\Phi'(u) -c\Phi' (u) 
=-\lambda \int_0^\infty\Phi'(u+y)\bar F(y)dy,
\eeq
suitable for our purposes. We are interested in the classical solution of this equation, that is, in $C([0,\infty])\cap C^2((0,\infty))$ with the boundary conditions at infinity and zero $\Phi(\infty)=1$ and $\Phi (0)=0$. 

The equation (\ref{IDE-F}) is reduced to a homogeneous integro-differential equation of the first order with respect to the derivative $g(u):=\Phi'(u)$. Introducing the notation 
$$
\gamma:=\frac{2((a-r)\kappa+r)}{\kappa^2 \sigma^2},\quad \alpha:=\frac{2c}{\kappa^2 \sigma^2}, \quad \mu:=\frac{2\lambda}{\kappa^2 \sigma^2}, 
 $$
we get the following IDE for $g$: 
\begin{equation}
\label{IDE}
 u^2 g'(u) + (\gamma u-\alpha) g(u)= - Ag(u),
\end{equation}
where
\beq
\label{Ag(u)}
 Ag(u):=\mu \int_0^\infty g(u+z)\,\overline{F}(z)\,dz.
\eeq

\smallskip

Note that for any $u>0$ and $p\ge 0$
$$
 |Ag(u)| \le \mu \sup_{x\ge u}x^p|g(x)| \int_0^\infty (u+z)^{-p} \bar F(z) dz \le \mu \|g\|_{p,u} \,u^{-p}\E[\xi],
$$
where $\|g\|_{p,u} :=\sup_{x\ge u}x^p|g(x)|$. The right-hand side is decreasing in $u$ and, therefore, for any $u_0>0 $ we have the bound 
\beq
\label{bound-A}
\sup_{u\ge u_0} |Ag(u)| \le \mu \|g\|_{p,u_0} \,u_0^{-p}\E[\xi]. 
\eeq

\begin{remark} If the company invests the whole capital reserve in the risky asset, i.e. $\kappa=1$, then in the notation of \cite{KP2016} the constant 
$\gamma:=2a/\sigma^2=\beta+1$. Recall that if $\beta\le 0$, then the (ultimate) ruin probability $\Psi(u)=1$ for all $u>0$. 
\end{remark}

\begin{lemma} Suppose that $\gamma>0$. Let $u_0>\mu \E[\xi]$. Then 
 the IDE (\ref{IDE}) has on $[u_0,\infty)$ a unique classical solution $g^*>0$ such that $u^\gamma g^*(u)\to 1$ as $u\to \infty$.
\end{lemma}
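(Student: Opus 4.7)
My approach is to recast (\ref{IDE}) as a fixed-point equation on $[u_0,\infty)$ and apply the Banach contraction principle in a weighted sup-norm. Dividing (\ref{IDE}) by $u^2$ and multiplying by the integrating factor $M(u) := u^\gamma e^{\alpha/u}$, chosen so that $M'/M = \gamma/u - \alpha/u^2$, puts the equation into conservation form $(Mg)'(u) = -u^{\gamma-2}e^{\alpha/u}Ag(u)$. The terminal condition $u^\gamma g(u)\to 1$ is equivalent to $M(u)g(u)\to 1$, and integrating from $u$ to infinity under this condition yields the integral equation $g = Tg$ with
$$(Tg)(u) := u^{-\gamma}e^{-\alpha/u}\Bigl(1 + \int_u^\infty v^{\gamma-2}e^{\alpha/v}\,Ag(v)\,dv\Bigr).$$

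Next I take $X$ to be the Banach space of continuous functions on $[u_0,\infty)$ equipped with the norm $\|g\|_{\gamma,u_0} := \sup_{u\ge u_0} u^\gamma|g(u)|$. The pointwise form of the estimate preceding (\ref{bound-A}), namely $|Ag(v)| \le \mu\E[\xi]\|g\|_{\gamma,u_0}\,v^{-\gamma}$ for $v\ge u_0$, together with the identity $\int_u^\infty v^{-2}e^{\alpha/v}\,dv = (e^{\alpha/u}-1)/\alpha$ and the elementary bound $(1-e^{-\alpha/u})/\alpha\le 1/u_0$, shows both that $T:X\to X$ and that
$$\|Tg_1-Tg_2\|_{\gamma,u_0} \;\le\; \frac{\mu\E[\xi]}{u_0}\,\|g_1-g_2\|_{\gamma,u_0}.$$
Since $u_0 > \mu\E[\xi]$, the map $T$ is a strict contraction, so the Banach fixed point theorem produces a unique $g^*\in X$ with $g^* = Tg^*$. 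Differentiating the representation $M g^* = 1 + \int_u^\infty v^{\gamma-2}e^{\alpha/v}Ag^*(v)\,dv$ shows that $g^*\in C^1([u_0,\infty))$ and is a classical solution of (\ref{IDE}).

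The asymptotic $u^\gamma g^*(u)\to 1$ reads off directly from the fixed-point identity, as $e^{-\alpha/u}\to 1$ and the integral term is $O(1/u)$ by the same estimates. Positivity follows by monotone iteration from $g_0(u) := u^{-\gamma}e^{-\alpha/u} > 0$: since $A$ preserves nonnegativity, $Tg_0 \ge g_0$, and because $T$ is monotone (on the cone of continuous nonnegative functions in $X$), the iterates $g_n := T^n g_0$ increase pointwise to $g^*$, so $g^*\ge g_0>0$.

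For uniqueness among \emph{all} classical solutions with $u^\gamma g(u)\to 1$, any such $g$ automatically lies in $X$ (continuity plus the asymptotic make $u^\gamma |g|$ bounded on $[u_0,\infty)$), and the derivation of $T$ is reversible: from $(Mg)' = -u^{\gamma-2}e^{\alpha/u}Ag$ together with $Mg\to 1$, integration from $u$ to infinity forces $g = Tg$, so $g = g^*$. I expect the main technical point to be the verification of the contraction estimate with the sharp constant $\mu\E[\xi]/u_0$; once the integrating factor $M$ is identified, the bound (\ref{bound-A}) essentially does the rest.
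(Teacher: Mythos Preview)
Your proof is correct and follows essentially the same route as the paper: the same integrating factor $M(u)=u^\gamma e^{\alpha/u}$, the same integral reformulation, the same weighted Banach space with norm $\|\cdot\|_{\gamma,u_0}$, the same contraction constant $\mu\E[\xi]/u_0$, and the same monotone-iteration argument for positivity from $g_0(u)=u^{-\gamma}e^{-\alpha/u}$. The only cosmetic differences are that you fold $g_0$ into the operator (writing $g=Tg$ rather than $g=g_0+Tg$) and evaluate $\int_u^\infty v^{-2}e^{\alpha/v}\,dv$ explicitly instead of using the cruder bound $e^{-\alpha/u}e^{\alpha/t}\le 1$; you also spell out that any classical solution with the stated asymptotic automatically lands in the Banach space, which tightens the uniqueness claim.
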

\begin{proof} Let $g\ge 0$ be a differentiable function satisfying (\ref{IDE}). On the arbitrary interval $[u_0,\infty)$ the equation (\ref{IDE}) can be written in the equivalent form as 
\beq
\label{IDE1}
\big( u^\gamma e^{\alpha/u} g(u) \big)' = - u^{\gamma-2}e^{\alpha/u} Ag(u).
\eeq
According to (\ref{bound-A}), $\|Ag\|_{\gamma,u_0}=\sup_{u\ge u_0} |Ag(u)|<\infty $. 
Integrating the above relation from $u$ to $\infty$ we get on $[u_0,\infty)$ the integral equation
$$
1-u^\gamma e^{\alpha/u} g(u)=- \int_u^\infty t^{\gamma-2}e^{\alpha/t} Ag(t)dt
$$
which can be rewritten in the form 
\beq
\label{g-Tg}
g(u)=g_0(u)+Tg(u)
\eeq
where $g_0(u):=u^{-\gamma} e^{-\alpha/u}$ and 
$$
 T g(u) = \mu\,u^{-\gamma} e^{-\alpha/u} \int_{u}^{\infty} t^{\gamma-2}\,e^{\alpha/t} \left(\!\int_{0}^{\infty} g(t+z)\bar F(z)dz\right)\,dt.
$$

Let us consider the Banach space 
$$
\mathcal{X}_{\gamma,u_0} = \left\{ g \in C([u_0, \infty]) \colon \|g\|_{\gamma,u_0} < \infty \right\}. 
$$
Note that for $u\ge u_0$ and any $g\in \mathcal{X}_{\gamma,u_0}$
$$
u^{\gamma} T g(u) \le \mu \int_{u_0}^{\infty} t^{\gamma-2}\,\left(\!\int_{0}^{\infty} g(t+z)\bar F(z)dz\right)\,dt\le \E[\xi]
\mu\,u_0^{-1}\|g\|_{\gamma,u_0} <\infty. 
$$
It follows that the linear operator $T$ maps the space $\mathcal{X}_{\gamma,u_0}$ into itself. 
Moreover, for any $g_1,g_2\in \mathcal{X}_{\gamma,u_0}$ we have the bound 
$$
||T g_1-T g_2||_{\gamma,u_0}\le 
\theta \|g_1-g_2\|_{\gamma,u_0}, \qquad \theta:=\mu \E[\xi]u_0^{-1}.
$$
Due to our choice of $u_0$ the constant $\theta<1$, so that the mapping $T$ is a contraction of the Banach space $\mathcal{X}_{\gamma,u_0}$. Since $g_0\in \mathcal{X}_{\gamma,u_0}$, the mapping $S: g\mapsto g_0+Tg$ also is a contraction of $\mathcal{X}_{\gamma,u_0}$. 
By the Banach theorem $S$ has a unique fixed point $g^*$. Since $g_0>0$ the successive iterations are increasing, the fixed point $g^*$ is a strictly positive function on $[u_0,\infty)$. It is easy to check that the function $S (g^*)$ is continuously differentiable and so is $g^*$. \fdem 
\end{proof}

\begin{remark} 
\label{rem1}
The solution $g^*$ on $[u_0,\infty)$ is obtained for a fixed $u_0$, hence, depends on the initial point $u_0$. In symbols, it can be written as $g(u;u_0)$. It is easily seen that this function remains the solution also on the interval $[u_1,\infty)$ for $u_1\ge u_0$. Due to the uniqueness we have that $g^*(u;u_0)=g^*(u;u_1)$ on $[u_1,\infty)$. 
\end{remark}

\bigskip
\begin{lemma} Suppose that $\gamma>1$. Then the IDE (\ref{IDE}) has on $[0,\infty)$ a unique classical solution $\hat g>0$ such that $\hat g=g^*$ on $[u_0,\infty)$ (therefore, $u^\gamma \hat g(u)\to 1$ as $u\to \infty$).
\end{lemma}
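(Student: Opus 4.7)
The plan is to extend the known solution $g^*$ backward from $u_0$ to $0$ by casting the IDE on $(0, u_0]$ as a terminal value problem whose value at $u_0$ is fixed to be $g^*(u_0)$. For any candidate $g \in C([0, u_0])$, I will extend it to $[0, \infty)$ by $g \equiv g^*$ on $[u_0, \infty)$, so that $Ag$ is well-defined and bounded by $\mu \|g\|_\infty \E[\xi]$ (using $\bar F \in L^1(\bbr_+)$, since $\E[\xi] < \infty$). The integrating factor $u^\gamma e^{\alpha/u}$, as in the preceding lemma, converts (\ref{IDE}) on $(0, u_0]$ to the identity $(u^\gamma e^{\alpha/u} g(u))' = -u^{\gamma-2} e^{\alpha/u} Ag(u)$; integrating from $u$ to $u_0$ with terminal value $g(u_0) = g^*(u_0)$ reduces the problem to finding a fixed point of
\[
Rg(u) := u^{-\gamma} e^{-\alpha/u}\Bigl[u_0^{\gamma}\,e^{\alpha/u_0}\,g^*(u_0) + \int_{u}^{u_0} t^{\gamma-2}\, e^{\alpha/t}\, Ag(t)\, dt\Bigr]
\]
in the Banach space $C([0, u_0])$ equipped with the sup norm.

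The hard part will be controlling $R$ near $u = 0$, since the inner factor $e^{\alpha/t}$ blows up as $t \to 0^+$. The key estimate I intend to use is that
\[
\phi(u) := u^{-\gamma} e^{-\alpha/u}\int_u^{u_0} t^{\gamma-2}\,e^{\alpha/t}\, dt
\]
is continuous and bounded on $[0, u_0]$ with $\phi(0^+) = 1/\alpha$; this follows from a Laplace-type asymptotic via the substitution $s = \alpha/t$, which turns the integral into $\alpha^{\gamma-1}\int_{\alpha/u_0}^{\alpha/u} s^{-\gamma} e^s\, ds \sim \alpha^{-1} u^\gamma e^{\alpha/u}$ as $u \to 0^+$. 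Combined with linearity of $A$, this yields the Lipschitz bound $\|Rg_1 - Rg_2\|_\infty \le \mu\E[\xi]\,\|\phi\|_\infty\, \|g_1 - g_2\|_\infty$. If $\mu\E[\xi]\|\phi\|_\infty < 1$, Banach's theorem applies directly; otherwise I will invoke Fubini to recast the equation as a genuine scalar Volterra equation $g(u) = \tilde h(u) + \int_u^{u_0} \tilde K(u, s)\, g(s)\, ds$ with bounded kernel $\tilde K(u, s) = \mu \int_u^s K(u, t)\bar F(s-t)\, dt \le \mu \phi(u)$, so that the iterates have norm bounded by $c^n/n!$ and some $R^n$ becomes a contraction, again producing a unique fixed point $g \in C([0, u_0])$.

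Setting $\hat g := g$ on $[0, u_0]$ and $\hat g := g^*$ on $[u_0, \infty)$ gives a continuous function on $[0, \infty)$ by construction. Smoothness on $(0, \infty)$ is then automatic: continuity of $A\hat g$ (by dominated convergence) and the IDE let me isolate $\hat g'$ as a continuous function, and the two one-sided derivatives at $u_0$ agree because both pieces satisfy the same equation with the same $\hat g(u_0)$ and $A\hat g(u_0)$. Positivity of $\hat g$ is built into the integrating-factor form: since $A\hat g > 0$, the function $u \mapsto u^\gamma e^{\alpha/u}\hat g(u)$ is non-increasing on $(0, u_0]$ and equals $u_0^\gamma e^{\alpha/u_0} g^*(u_0) > 0$ at $u_0$. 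Uniqueness of $\hat g$ on $[0, \infty)$ follows by combining uniqueness of $g^*$ on $[u_0, \infty)$ (the preceding lemma together with Remark \ref{rem1}) with uniqueness of the fixed point of $R$ on $[0, u_0]$.
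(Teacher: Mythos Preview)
Your approach is essentially the paper's: both reduce the problem on $(0,u_0]$ to the same Volterra equation of the second kind (your $\tilde K(u,s)$ is exactly the paper's $K(u,y)=\mu M(u)^{-1}\int_u^y t^{\gamma-2}e^{\alpha/t}\bar F(y-t)\,dt$), then glue with $g^*$. The difference is cosmetic: the paper solves the Volterra equation on $(0,u_0]$ via a textbook citation, bounds the solution by Gronwall--Bellman, and then extends to $u=0$ by L'H\^opital; you instead work directly in $C([0,u_0])$ using the boundedness of $\phi$ and the standard factorial estimate on Volterra iterates, which packages the same analysis more compactly.

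One point to tighten: your positivity argument is circular as stated. You write ``since $A\hat g>0$'' to conclude that $u\mapsto u^\gamma e^{\alpha/u}\hat g(u)$ is non-increasing, but $A\hat g(u)>0$ already requires $\hat g\ge 0$ on $[u,\infty)$. The fix is a short continuation argument: set $u^*:=\inf\{u\in(0,u_0]:\hat g>0\text{ on }(u,u_0]\}$; for $t>u^*$ the integrand of $A\hat g(t)$ involves only values of $\hat g$ on $(u^*,\infty)$, where $\hat g>0$, so $A\hat g(t)>0$ and the integrating-factor identity forces $\hat g(u^*)>0$, contradicting $u^*>0$. (The paper, incidentally, does not spell out positivity on $(0,u_0]$ either.)
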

\begin{proof} To solve the IDE (\ref{IDE1}) (equivalent to (\ref{IDE})) on the interval $(0,u_0)$ with the terminal condition 
$v(u_0)=g^*(u_0)$ we first transform it to the integral equation 
\beq
\label{IE}
u^\gamma e^{\alpha/u} g(u)=u_0^\gamma e^{\alpha/u_0} g^*(u_0)+\int_u^{u_0}y^{\gamma-2}e^{\alpha/y} Ag(y)dy.
\eeq
Splitting the integral in the definition of $Ag(u)$ at the point $u_0$ we get after change of variables that 
$$
 Ag(u)
 =\mu \int_u^{u_0} g(y)\overline{F}(y-u)\,dy + H(u), 
$$
where the function 
$$
 H(u):= \mu \int_{u_0}^\infty g^*(y)\overline{F}(y-u)\,dy
$$
is already known. It is increasing and bounded: 
$$
H(u)\le \mu \sup_{y\ge u_0}g^*(y)\int_u^\infty\bar F(u-y)dy= \mu \sup_{y\ge u_0}g^*(y) \E\xi. 
$$
Using the dominated convergence, we get that the limit $H(0+)$ exists and equal to 
\beq
\label{H0}
H(0)=\mu \int_{u_0}^\infty g^*(y)\overline{F}(y-)\,dy>0. 
\eeq

Let $M(u):=u^\gamma e^{\alpha/u}$. Dividing by $M(u)$ and changing the order of integration, we get from (\ref{IE}) 
 the equation 
\begin{equation}
\label{eq:volterra_final+}
g(u) = f(u) + \int_{u}^{u_{0}} K(u, y)g(y)\,dy, \quad u \in (0, u_0],
\end{equation}
where 
$$
f(u):=\frac {M(u_0)}{M(u)}g^*(u_0)+\frac {1}{M(u)}\int_u^{u_0} t^{\gamma-2}e^{\alpha/t} H(t)\,dt, 
$$
$$
K(u, y):=\mu\frac{1}{M(u)} \int_u^y t^{\gamma-2}e^{\alpha/t} \overline{F}(y-t)\,dt.
$$

By Theorem 2.1.1 from the book \cite{Burton} the relation \eqref{eq:volterra_final+} is a Volterra equation of the second kind with continuous kernel. It admits a unique solution $g_* \in C((0, u_0])$. It is also clear that $g_*\in C^1((0, u_0))$. 

Note that $f$ is continuous function on $(0,u_0]$. Let us 
check that it can be extended as a continuous function on the closed interval $(0,u_0]$.
Indeed, $M'(u)=(\gamma u -\alpha )u^{\gamma-2}e^{\alpha/ u}$ and by the L'H\^{o}pital rule
$$
\lim_{u\downarrow 0} f(u)=\lim_{u\downarrow 0}\frac {1}{M(u)}\int_u^{u_0} t^{\gamma-2}e^{\alpha/t} H(t)\,dt=
-\lim_{u\downarrow 0}\frac{1}{M'(u)}u^{\gamma-2}e^{\alpha/u} H(u)=\frac{H(0)}{\alpha},
$$
where $H(0)$ is given by the formula (\ref{H0}). 
It follows that $f$ can be extended as a continuous function on the closed interval $[0,u_0]$. Hence, it is bounded from above by some constant $\kappa>0$. 

A similar calculation shows that for all $y\in (0,u_0]$
$$
\lim_{u\downarrow 0} K(u,y)=\frac {\mu}{\alpha}\bar F(y-). 
$$

For all $u\in (0,u_0$) and $y\ge u$ we have that 
$$
 K(u,y)\le \mu \frac{1}{M(u)} \int_u^{y}e^{\alpha/t} t^{\gamma-2}\,dt\le \frac {\mu}{\gamma -1}y^{\gamma-1}:=h(y)
$$

Taking into account these properties we get that for the solution $g_*$ of (\ref{eq:volterra_final+}) the bound 
$$
g_*(u)\le \kappa+ \int_u^{u_0}h(y)g_*(y)dy, \qquad u \in (0,u_0].
$$
By the Gronwall--Bellman inequality 
$$
g_*(u)\le \kappa \exp \left\{ \int_u^{u_0}h(y)\,dy \right\}.
$$
Hence, $g_*(u)$ is bounded on $(0,u_0]$. 

Passing to the limit in (\ref{eq:volterra_final+}) as $u\downarrow 0$ we get that 
$$
g_*(0+)=\frac{H(0)}{\alpha}+\frac{\mu }{\alpha}\int_0^{u_0}g_*(y)\bar F(y) \,dy=\frac {\mu}{\alpha}\int_{u_0}^{\infty}g^*(y)\bar F(y)\,dy+\frac{\mu }{\alpha}\int_0^{u_0}g_*(y)\bar F(y) \,dy
$$
and we can extend $g_*$ as a continuous function on the closed interval $[0,\infty]$ by putting $g_*(0):=g_*(0+)$. 
It follows that the function $\hat g:= g_*I_{[0,u_0]}+g^*I_{(u_0,\infty)}$ is the unique classical solution of the equation (\ref{IDE}) such that 
$u^\gamma \hat g(u)\to \infty$ as $u\to \infty$. 

Note that there is no problem with derivative at the point $u_0$. The constructed solution due to the announced uniqueness can be replaced by an arbitrary point $u_1>u_0$ without influencing the solution $\hat g$. \fdem
\end{proof}

Thus, $\hat g$ is the classical solution. 

\section{The probability of non-ruin and the solution of IDE} 
Due to linearity of the equation (\ref{IDE}) all its solutions with the asymptotic behavior at infinity $C u^{-\gamma}$, $C>0$, are of the form $C\hat g(u)$. The corresponding general solution of the equation (\ref{int-diff}) has the form 
\beq
\label{GC0C}
G(u)=C_0+C\int_0^u \hat g(t)\,dt. 
\eeq
Our aim is to choose the constants to ensure that $G=\Phi$. The following lemma shows that the constant $C_0$ must be equal to zero.
\begin{lemma}
$\Phi(0+)=\Phi(0):=0$.
\end{lemma}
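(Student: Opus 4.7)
The plan is to compare $X^u$ with the jump-free version of the SDE and to exploit the fact that a Poisson claim arrival cannot save the company on an arbitrarily short initial time interval. Denote by $Y^u$ the solution of the linear SDE
\[
dY_t^u = \bigl((a-r)\kappa + r\bigr) Y_t^u\,dt + \kappa\sigma Y_t^u\,dW_t - c\,dt, \qquad Y_0^u = u,
\]
that is, the equation (\ref{two}) in which the business process $P$ is replaced by the deterministic drift $-ct$. By the variation-of-constants formula, the explicit solution is $Y_t^u = M_t\bigl(u - c\int_0^t M_s^{-1}\,ds\bigr)$, where $M_t := \exp\bigl(((a-r)\kappa + r - \kappa^2\sigma^2/2)\,t + \kappa\sigma W_t\bigr)$. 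For the associated ruin time $T^u := \inf\{t \ge 0 : Y_t^u \le 0\}$ one therefore has $T^u = \inf\{t > 0 : c\int_0^t M_s^{-1}\,ds \ge u\}$. Since $s \mapsto M_s^{-1}$ is almost surely continuous and strictly positive, $\int_0^t M_s^{-1}\,ds$ is strictly positive for every $t > 0$ almost surely, and hence $T^u \downarrow 0$ a.s.\ as $u \downarrow 0$.

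Next I would bring in the first claim arrival time $\tau_1$: it is exponentially distributed with parameter $\lambda$ and, by assumption, independent of $W$, hence of $T^u$. Before $\tau_1$ the process $X^u$ has experienced no jump and coincides with $Y^u$, so on the event $\{T^u < \tau_1\}$ one has $X^u_{T^u} = Y^u_{T^u} \le 0$, and therefore $\tau^u \le T^u < \infty$. This produces the lower bound
\[
\Psi(u) \ge \P(T^u < \tau_1) = \E\bigl[e^{-\lambda T^u}\bigr].
\]
Dominated convergence combined with $T^u \to 0$ a.s.\ yields $\E[e^{-\lambda T^u}] \to 1$, whence $\Psi(u) \to 1$ and $\Phi(u) \to 0$ as $u \downarrow 0$. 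Setting $\Phi(0) := 0$ then extends $\Phi$ to a continuous function on $[0,\infty)$, as required.

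The only delicate point I anticipate is the a.s.\ strict positivity of $\int_0^t M_s^{-1}\,ds$ for every $t > 0$, which is however immediate since $M^{-1}$ is a.s.\ continuous and strictly positive on each compact $[0,t]$. All remaining steps rely only on the explicit form of the linear diffusion and on the independence of $W$ from the Poisson measure, and should be entirely routine.
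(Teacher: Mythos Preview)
Your argument is correct and follows essentially the same route as the paper: introduce the jump-free process via the stochastic Cauchy formula, observe that it coincides with $X^u$ up to the first Poisson time, and show its hitting time of zero tends to $0$ as $u\downarrow 0$. The only cosmetic difference is that the paper bounds $\tau^u\wedge T_1$ pathwise using $\inf_{s\le T_1}Z_s$ and $\sup_{s\le T_1}Z_s$, whereas you invoke independence to write $\P(T^u<\tau_1)=\E[e^{-\lambda T^u}]$ and pass to the limit by dominated convergence.
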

\begin{proof} 
Consider the stochastic exponential $Z=(Z_t)_{t\ge 0}$ given by the linear SDE 
$$
dZ_t=Z_t \left( ((a-r)\kappa + r) \,d t + \sigma \,dW_t \right), \qquad Z_0=1, 
$$
and define the process $\hat X^u$ 
$$
\hat X^u_t=Z_t\Big( u-c\int_0^t Z_s^{-1}ds\Big).
$$

Let $\hat \tau^ u:=\inf\{t\ge 0\colon \hat X^u=0\}$ and let $T_1$ be the instant of first jump of the Poisson process $N$. 
According to the stochastic Cauchy formula giving the solution of non-homogeneous linear SDE 
$X^u=\hat X^u$ on $[0,T_1)$ and $X^u_{T_1}=\hat X^u_{T_1}+\Delta X^u_{T_1}\ge \hat X^u_{T_1}$. Thus, $\hat \tau^ u\wedge T_1= \tau^ u\wedge T_1$. It follows that 
$$
0\le \hat X^u_{\tau^u\wedge T_1}=Z_{\tau^u\wedge T_1}\Big( u-c\int_0^{\tau^u\wedge T_1} Z_s^{-1}ds\Big)
\le \xi^* u-c\frac {\xi_*}{\xi^*}(\tau^u\wedge T_1),
$$
where $\xi_*:=\inf_{s\le T_1}Z_s$, $\xi^*:=\sup_{s\le T_1}Z_s$. Therefore, $\tau^ u\to 0$ as $u\to 0$ and the result follows.
\fdem
\end{proof} 

We choose the constant $C$ to have the equality $G(\infty)=\Phi(\infty) = 1 $.
 
\begin{lemma}
The function $G$ given by (\ref{GC0C}) with $C_0=0$ and $C=1/\int_0^\infty \hat g(s)\, ds$ coincides with $\Phi$. 
\end{lemma}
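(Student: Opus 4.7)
The plan is to combine the properties of $G$ established in the preceding lemmas with a stochastic verification argument via It\^o's formula: show that $G(X^u_{t\wedge\tau^u})$ is a bounded martingale, pass to the limit as $t\to\infty$, and identify the a.s.\ limit with the indicator of non-ruin.

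First I would collect the needed properties of $G$. By the preceding lemmas, $\hat g$ is a strictly positive continuous function on $[0,\infty)$ with $u^\gamma \hat g(u)\to 1$; since $\gamma>1$, the integral $\int_0^\infty \hat g(t)\,dt$ converges, so $C$ is a finite positive constant. Consequently $G\in C([0,\infty))\cap C^2((0,\infty))$ is strictly increasing from $G(0)=0$ to $G(\infty)=1$, hence takes values in $[0,1]$, and by construction it satisfies the IDE (\ref{int-diff}) on $(0,\infty)$.

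Next I would apply It\^o's formula to $G(X^u_t)$ on the stochastic interval $[0,\tau^u)$, on which $X^u$ is strictly positive and $G$ is $C^2$. The drift coefficient of $G(X^u)$ is the sum of the diffusion part $\tfrac12\kappa^2\sigma^2 u^2 G''(u)+((a-r)\kappa+r)u\,G'(u)$, the contribution $-c\,G'(u)$ from the continuous part of $P$, and the compensator of the jumps $\lambda\int_0^\infty[G(u+y)-G(u)]\,F(dy)$; by (\ref{int-diff}) this sum vanishes identically. Therefore $M_t:=G(X^u_{t\wedge\tau^u})$ is a local martingale with values in $[0,1]$, hence a genuine bounded martingale. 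By martingale convergence, $M_t$ converges a.s.\ and in $L^1$ to some $L\in[0,1]$ with $G(u)=\E[L]$.

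It then remains to identify $L$. Since the jumps $\xi_i$ are strictly positive, $X^u$ can reach zero only continuously, so $X^u_{\tau^u}=0$ on $\{\tau^u<\infty\}$ and $L=G(0)=0$ on that event. On $\{\tau^u=\infty\}$, the continuity and strict monotonicity of $G$ (viewed as a bijection from $[0,\infty]$ onto $[0,1]$) make $G^{-1}$ continuous, so $X^u_t=G^{-1}(M_t)$ converges a.s.\ in $[0,\infty]$. The hypothesis $\gamma>1$ makes (\ref{two}) transient at infinity: convergence of $X^u_t$ to a finite positive value $x_\infty$ is excluded by the divergence of the quadratic variation $\int_0^t\kappa^2\sigma^2(X^u_s)^2\,ds$ of the continuous martingale part (Dubins--Schwartz then forces oscillation), while convergence to $0$ is excluded because between Poisson jumps the drift of $X^u$ is dominated by the constant $-c$, so $X^u$ would hit zero in finite time. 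Hence $X^u_t\to\infty$ on $\{\tau^u=\infty\}$, so $L=1$ there, and $G(u)=\P(\tau^u=\infty)=\Phi(u)$.

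The main obstacle is this last step—ruling out the two pathological limits of $X^u$ on the survival set. Everything else is routine once the preceding lemmas provide a bounded classical solution $G$ of (\ref{int-diff}) with the correct boundary conditions. I would make the transience argument rigorous by a direct pathwise comparison with the deterministic ODE $\dot x=((a-r)\kappa+r)x-c$ near the origin and by the law of the iterated logarithm (via Dubins--Schwartz) for the continuous martingale part away from zero.
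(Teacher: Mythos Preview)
Your approach is exactly the paper's: It\^o's formula kills the drift via (\ref{int-diff}), leaving a bounded martingale $G(X^u_{t\wedge\tau^u})$ whose limit is identified with $I_{\{\tau^u=\infty\}}$. You in fact do more than the paper, which simply writes $G(u)=G(\infty)\,\P(\tau^u=\infty)$ without justifying that $X^u_t\to\infty$ on the survival set; your sketch of this last step is sound, and a slightly cleaner variant is to observe that boundedness of $M$ forces $\langle M^c\rangle_\infty=\int_0^\infty\bigl(G'(X^u_s)\kappa\sigma X^u_s\bigr)^2\,ds<\infty$ a.s.\ (ruling out any finite positive limit directly), while infinitely many jumps of size exceeding some fixed $\epsilon_0>0$ (second Borel--Cantelli) rule out convergence to~$0$.
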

\begin{proof} Let us apply It\^o's formula to $(X^u_{t\wedge \tau})$ where $G$ which is the classical solution of (\ref{int-diff}) with $G(0)=0$ and $G(\infty)=1$. Compensating the jump component and regrouping terms we observe that the integrand of the integral with respect to $ds$ is equal to zero in virtue of (\ref{int-diff}) and, therefore, $G(X^u_{t\wedge \tau^u})-G(u)=M_t$, where 
$$
M_t=\int_0^{t\wedge \tau^u}G'(X^u_{s\wedge \tau^u)}) \kappa\sigma X^u_s dW_s+\int_0^{t\wedge \tau^u}\int (G(X^u_{s-}+x)-G(X^u_{s-})(p-q)(ds,dx). 
$$
The process $(M_t)$ is a bounded martingale starting from zero. Hence, $\E [M_\tau]=0$ for any stopping time $\tau$. 
It follows that 
$$
G(u)
= \E\left[ G(X^u_{\tau^u})I_{\{\tau^u=\infty\}}\right]+\E\left[ G(X^u_{\tau^u})I_{\{\tau^u<\infty\}}\right]=G(\infty)\P(\tau^u=\infty)=\Phi(u) 
$$ 
and we get the result. \fdem
\end{proof}

Since $\Phi=G$ and, by the L'h\^opital rule $1-G(u)\sim Cu^{-\gamma + 1}$, as $u\to \infty$ we get all properties announced in Theorem \ref{main}.

\section*{Competing interests}
The author declares no competing interests.

\acknowledgement 
The author expresses his gratitude to Yuri Kabanov for the problem formulation and helpful guidance.

\end{document}